\newtheorem{proposition}{Proposition}
\newtheorem{corollary}{Corollary}
\newtheorem{problem}{Problem}
\begin{document}

\thispagestyle{empty}

\begin{center}

{\Large \bf Some results on the Wiener index related \\[1mm] to the \v{S}olt\'{e}s problem of graphs}

\vspace{3mm}

Andrey A. Dobrynin$^{a}$ and Konstantin V. Vorob'ev$^{a,b}$

\vspace{3mm}

{\it $^a$Sobolev Institute of Mathematics, Siberian Branch of the 
Russian Academy of \\ Sciences,  Novosibirsk, 630090, Russia, dobr@math.nsc.ru \\
$^b$Institute of Mathematics and Informatics, Bulgarian Academy of Sciences, \\
 Sofia 1113, Bulgaria, konstantin.vorobev@gmail.com }

\end{center}

\vspace{3mm}

\begin{abstract}
The Wiener index, $W(G)$,  of a connected graph $G$ is the sum of distances
between its vertices.
In 2021, Akhmejanova et al. posed the problem of finding  graphs $G$
with large
$R_m(G)= |\{v\in V(G)\,|\,W(G)-W(G-v)=m \in \mathbb{Z} \}|/ |V(G)|$.
It is shown that there is a graph $G$ with  $R_m(G) > 1/2$
 for any integer $m \ge 0$.
In particular, there is a regular graph of even degree
with this property  for any odd $m \ge 1$.
The proposed approach allows to construct new families of graphs $G$ with
$R_0(G) \rightarrow 1/2$ when the order of $G$ increases.
\end{abstract}

%\begin{keyword}
%\texttt Wiener index  \sep \v{S}olt\'{e}s problem
%\MSC[2020] 05C09 \sep 05C12 \sep 05C92
%\end{keyword}

%\linenumbers

\section{Introduction}
The graphs $G$ we consider are undirected, simple and connected.
The vertex set of $G$ is denoted by $V(G)$ and the cardinality of $V(G)$
is called the order of $G$.
A graph obtained by removing a vertex $v$ from $G$
is denoted by $G-v$.
The distance $d(u,v)$ between vertices
$u,v \in V(G)$ is the number of edges on a shortest $(u,v)$-path
in $G$.
The transmission of a vertex $v \in V(G)$ is the sum
of distances from $v$ to all vertices of $G$,
$tr(v)=\sum_{u\in V(G)} d(v,u)$.
A half of the sum of all vertex transmissions defines the Wiener index
of a graph,
$$
W(G)=\frac{1}{2} \sum_{v\in V(G)} tr(v).
$$
This distance graph invariant and its numerous modifications and
generalizations are intensively studied in theoretical
and mathematical chemistry
\cite{Bonch91,Dobr01,Dobr02,Gutm86,Knor16,Niko95,Polan86,Tode00,Trin83}.
In particular, the Wiener index has found numerous applications
in the modelling of physico-chemical and biological
properties of organic molecules
\cite{Bonch91,Gutm86,Rouv83,Trin83,Wien47}.

If graph $G-v$ is connected, then we are interested in estimating
the number of vertices $v$ having coinciding differences
$\Delta_v(G)=W(G)-W(G-v)$.
Let
$R_m(G)= |\{v\in V(G)\,|\,\Delta_v(G)=m \in \mathbb{Z} \}|/|V(G)|$.
In 2023, Akhmejanova et al.
\cite{Akhm2023}
posed the problem of finding  graphs $G$ with large $R_m(G)$.
They constructed graphs $G$ with  $R_m(G) < 1/2$ and
$R_m(G) \rightarrow 1/2$ when the order of $G$ tends to infinity
for any integer $m$.
Special attention of researchers is attracted by the case
$\Delta_v(G)=0$.
In 1991, \v{S}olt\'{e}s
\cite{Solt91}
posed the following problem:
find all graphs $G$ for which equality $\Delta_v(G)=0$ holds for all
vertices $v$ of $G$.
A vertex $v$  is called a \v{S}olt\'{e}s vertex  if $\Delta_v(G)=0$.
Graphs with $R_0(G)=1$ gives a solution of the \v{S}olt\'{e}s problem.
Simple cycle $C_{11}$ is the unique known example of such graphs.
The second known maximum value of $R_0(G)$ is 2/3
and it is achieved on graph $G$ shown in Fig.~\ref{Fig1}
\cite{Akhm2023}.
Failures in building new \v{S}olt\'{e}s graphs led to the
formulation of various relaxed problems
\cite{Akhm2023,Bas23,Bok19,Bok21,Dobr03,Hu21,Knor18-1,Knor18-2,
Knor23,Majs18,Spir22}.

\begin{figure}[h]
\center{\includegraphics[width=0.25\linewidth]{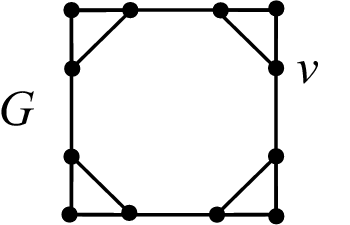}}
\caption{Graph $G$ with $W(G)=W(G-v)$ and $R_0(G)=2/3 \approx 0.667$.}
\label{Fig1}
\end{figure}

In this paper, we construct graphs $G$ with $R_m(G) > 1/2$
for any integer $m~\ge~0$. In particular, some of them are
regular graphs for odd $m \ge 1$.
The proposed approach also allows to construct new families of
graphs $G$ with $R_0(G) < 1/2$ and $R_0(G) \rightarrow 1/2$
when the order of $G$ increases.

\section{General construction}

The absence of the transmission of a vertex $v$ removed from a graph $G$
must be compensated by an increase of distances between vertices in
graph $G-v$.
Known examples show that a promising way to find graphs
having large number of \v{S}olt\'{e}s vertices
is to remove a vertex $v$ from long cycles without chords.

Consider graph $H=H(n,k,l,n_0,t_0)$ shown in
Fig.~\ref{Fig2},
where $n \ge 3$, $k \ge 2$, and $l \ge 1$.
It consists of $k$ copies of the simple cycle $C_n$
whose vertices are identified with vertices of one part
of $n$ copies of the complete bipartite graph $K_{k,l}$.
The vertices of the other part of every $K_{k,l}$ are included in an arbitrary
graph $F$ of order $n_0$ in the same way.
The order of $H$ is $n(k + n_0)$.
Let $C$ be a union of $k$ cycles $C_n$ of $H$, $|V(C)|=kn$.
Denote by $F_v$ a copy of graph $F$ whose vertices are adjacent with a vertex $v$ of $C$.
Since every $v \in C$ is adjacent to all vertices of
$V(F_v) \cap V(K_{k,l})$, the vertex set of $C$ form an orbit
of the automorphism group of $H$.
Let $t_0$ be the transmission of a vertex $v$ in the induced subgraph
with the vertex set $V(F_v) \cup \{v\}$.

\begin{figure}[t]
\center{\includegraphics[width=0.6\linewidth]{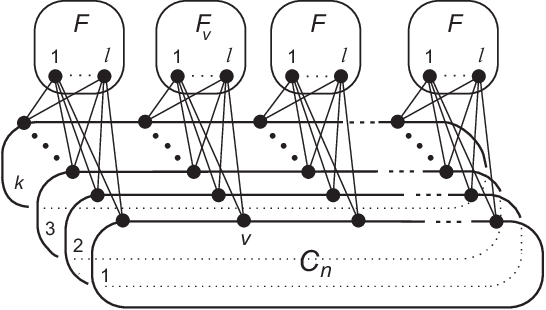}}
\caption{Graph $H=H(n,k,l,n_0,t_0)$.}
\label{Fig2}
\end{figure}

\begin{proposition} \label{Prop1}
Let $v$ be an arbitrary vertex of $C$ and $m=\Delta_v(H)$. Then
$$
R_m(H) \geq \frac{k}{k + n_0},
$$
where
$$
\Delta_v(H)= n(2k+t_0+n_0+2) - \frac{n^2}{4}(n_0-k+2) -
  \begin{cases}
     2n_0+8,               & \text{if $n$ is even,} \\[1mm]
     \frac{k+11n_0+34}{4}, & \text{if $n$ is odd.}
  \end{cases}
$$
\end{proposition}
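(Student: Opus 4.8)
The bound $R_m(H)\ge k/(k+n_0)$ is immediate once the formula for $m$ is in hand: since $V(C)$ is an orbit of $\mathrm{Aut}(H)$, the quantity $\Delta_v(H)$ takes one and the same value $m$ at every vertex of $C$, and $|V(C)|=kn$ out of $|V(H)|=n(k+n_0)$. So the real task is to evaluate $m=\Delta_v(H)$ for a single $v\in V(C)$. I would label each vertex of $C$ by a pair $(i,j)$, with $i\in\{0,\dots,n-1\}$ the position along the cycles and $j\in\{1,\dots,k\}$ the cycle index, write $F_i$ for the copy of $F$ attached at position $i$, and fix $v$ to be the vertex $(0,1)$. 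The starting point is the identity
\[
\Delta_v(H)=W(H)-W(H-v)=tr(v)-D(v),\qquad D(v)=\sum_{\{x,y\}\subseteq V(H)\setminus\{v\}}\bigl(d_{H-v}(x,y)-d_H(x,y)\bigr),
\]
in which every term of $D(v)$ is nonnegative because deleting a vertex of a connected graph cannot shorten a shortest path.

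The first step is to describe geodesics in $H$. Let $d_n$ be the distance in $C_n$, and for $u\in F_i$ let $\delta(u)$ be the distance in $H$ from $u$ to a cycle vertex at position $i$ (the $k$ such vertices are all equidistant from $u$, since they share the same neighbours inside $F_i$); geodesics from a cycle vertex into its own block stay local, so $\sum_{u\in F_v}\delta(u)=t_0$. The two structural facts are: (a) each $F_i$ is joined to the rest of $H$ only through the $k$ cycle vertices at position $i$; and (b) one may pass between two positions along any of the $k\ge 2$ parallel cycles. These give, for cycle vertices, $d_H\bigl((a,j),(b,j')\bigr)=d_n(a,b)$ if $j=j'$ and $d_n(a,b)+2$ otherwise; $d_H\bigl((a,j),u\bigr)=d_n(a,i)+\delta(u)$ for $u\in F_i$; and $d_H(u,u')=\delta(u)+d_n(i,i')+\delta(u')$ for $u\in F_i$, $u'\in F_{i'}$, $i\ne i'$. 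Summing the distances from $v$ over all other vertices, using $\sum_{x\in V(C_n)}d_n(\cdot,x)=\lfloor n^2/4\rfloor$ and $\sum_{u\in F_v}\delta(u)=t_0$, yields
\[
tr(v)=(k+n_0)\lfloor n^2/4\rfloor+2(k-1)n+nt_0 .
\]

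The second step is $D(v)$. A pair $\{x,y\}$ contributes exactly $d_{H-v}(x,y)-d_H(x,y)$, which is positive precisely when every $(x,y)$-geodesic runs through $v=(0,1)$; by (a) and (b), any geodesic that can be re-routed onto one of the other $k-1$ cycles, and any geodesic staying inside some $F_i$ with $i\ne 0$, automatically avoids $v$. Going through the cases, the only contributing pairs are: (i) two cycle vertices $(a,1),(b,1)$ on the first cycle with $|a-b|>n/2$; (ii) a vertex of $F_0$ together with $(b,1)$, $b\ne 0$; and (iii) a vertex of $F_i$ with $i\ne 0$ together with $(b,1)$, $|i-b|>n/2$. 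In cases (i) and (iii) the distance in $H-v$ is the smaller of the length of the leftover route around the first cycle and the length of a detour through a parallel cycle; in case (ii) only the detour is available. This makes the increase a \emph{capped} quantity: $\min(2|a-b|-n,\,4)$ in case (i), exactly $2$ in case (ii), and $\min(2|i-b|-n,\,2)$ in case (iii), the latter two carrying a factor $n_0$ for the choice of $F$-vertex. Summing, $D(v)=I_1+2n_0(n-1)+2n_0 S$, where $I_1=\sum\min(2|a-b|-n,4)$ and $S=\sum\min(2|a-b|-n,2)$, the sums running over pairs $a<b$ in $\{1,\dots,n-1\}$ with $|a-b|>n/2$; both are elementary arithmetic sums with closed forms depending on the parity of $n$ (for $n$ even, $I_1=\tfrac12(n-4)^2$ and $S=(\tfrac n2-2)(\tfrac n2-1)$; the odd case is analogous).

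It then remains to substitute into $\Delta_v(H)=tr(v)-D(v)$, treat $n$ even and $n$ odd separately so that $\lfloor n^2/4\rfloor$, $I_1$ and $S$ become explicit polynomials in $n$, and simplify; in both cases the expression collapses to the stated formula (in the odd case the fractional parts cancel against the correction term). The main obstacle is the analysis of $D(v)$: one must verify that families (i)–(iii) really are the \emph{only} pairs all of whose geodesics pass through $v$ — in particular that no pair of $F$-vertices, and no pair of cycle vertices on different cycles, ever does — and that the increases equal exactly the capped values, which is where property (b) is used decisively. The subsequent parity bookkeeping is routine but must be carried out carefully to land the correction terms $2n_0+8$ and $(k+11n_0+34)/4$.
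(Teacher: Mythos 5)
Your proposal is correct and follows essentially the same route as the paper's proof: the orbit argument for the bound, the identity $\Delta_v(H)=tr(v)-\sum_{\{x,y\}}\bigl(d_{H-v}(x,y)-d_H(x,y)\bigr)$, the same transmission formula, and the same three families of pairs with the same capped increases $\min(2|a-b|-n,4)$, $2$, and $\min(2|a-b|-n,2)$, whose sums match the paper's Case 1--3 totals and assemble to the stated expression. Your write-up is, if anything, slightly more explicit about why these three families exhaust all pairs whose geodesics are forced through $v$.
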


\begin{proof}
For convenience, we will calculate the value of $\Delta'_v(H) = -\Delta_v(H)$.
The difference $\Delta'_v(H)= W(H-v)-W(H)$ can be presented as
\begin{equation}{\label{eq1}}
 \Delta'_v(H)= \frac{1}{2} \sum_{u,w\in V(H-v)} [d_{H-v}(u,w)-d_{H}(u,w)]  -tr_H(v).
\end{equation}

By direct calculations, we have
\begin{equation}{\label{eq2}}
tr_H(v)= 	
\begin{cases}
 \frac{n^2}{4}(k+n_0)+n(2k+t_0-2),   & \text{if $n$ is even,}\\[1mm]
 \frac{n^2-1}{4}(k+n_0)+n(2k+t_0-2), & \text{if $n$ is odd.}
\end{cases}
\end{equation}
	
The next step is to find all pairs of vertices of graph $H$ (except $v$)
such that the distance between them changes after removing vertex $v$ (equivalently,
all shortest paths between pairs of vertices containing $v$).
Then the summation of the distance differences will give $\Delta'_v(H)$.

Denote by $P$ the simple path of order $n-1$ obtained by removing vertex $v=u_0$
from an arbitrary cycle $(u_0,u_1,u_2,\dots,u_{n-1},u_0)$ of $C$ (see Fig.~\ref{Fig3}).
By construction of $H-u_0$, the shortest paths between vertices $x,y \in V(H-P)$
do not contain  vertex $v$.
The same is also true if the first vertex belongs to $P$ and the second
one belongs to $C-P$.
Then it is sufficient to consider the following three cases
for calculating difference between vertex distances.

\begin{figure}[h]
\center{\includegraphics[width=\linewidth]{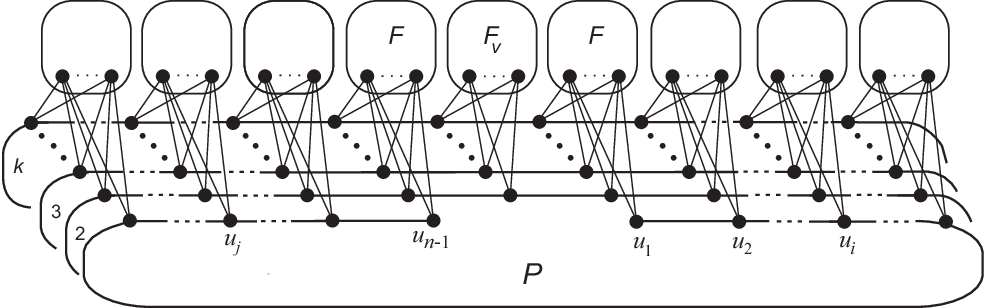}}
\caption{Graph $H-u_0$.}
\label{Fig3}
\end{figure}

\textbf{Case 1}.
Two vertices belong to the path $P$.
Because of symmetry of $P$, vertex $u_i$, $1\leq i<\frac{n}{2}-1$,
can be chosen as the first vertex (some vertices opposite to $u_0$
in the cycle preserve distances to vertices of $H-v$).
Every shortest path between $u_i$ and vertex $u_j$ of $P$ contains $u_0$ in $H$ if and only if
$\frac{n}{2}+i+1\leq j\leq n-1$ for even $n$ and $\frac{n-1}{2}+i+1\leq j\leq n-1$ for odd $n$.
By choice of $i$ and $j$, $d_H(u_i,u_j)=n+i-j$.
After removing $u_0$, there are two possible types of shortest paths between $u_i$ and $u_j$.
In the first case, we go from $u_i$ to $u_j$ by remaining vertices of $P$ and the distance is $j-i$.
In the second case, we move to another $n$-cycle, repeat there the corresponding
shortest path and then come back to $P$ which gives the distance $n+i-j-4$.
Summing all these distance differences, we have
$$
\sum_{i=1}^{\frac{n-4}{2}}\sum_{j=\frac{n}{2}+i+1}^{n-1}\min\{4,2j-2i-n\}=\frac{n^2-8n+16}{2},
\text{   if $n$ is even},
$$
$$
\sum_{i=1}^{\frac{n-3}{2}}\sum_{j=\frac{n-1}{2}+i+1}^{n-1}\min\{4,2j-2i-n\}=\frac{n^2-8n+17}{2},
\text{   if $n$ is odd}.
$$

\textbf{Case 2}.
One vertex belongs to $P$ and another one belongs to graph $F_v$.
Evidently, the distance from the first vertex to any vertex of $F_v$
is increased by two after removing $v$ (one need to move to another
$n$-cycle). Consequently, the total difference in this case equals $2n_0(n-1)$.

\textbf{Case 3}.
One vertex belongs to $P$ and another one belongs to some of $n-1$ graphs $F$ (except $F_v$).
By arguments similar to the previous cases, the sum of distance differences equals
$$
2n_0\sum_{i=1}^{\frac{n-4}{2}}\sum_{j=\frac{n}{2}+i+1}^{n-1}\min\{2,2j-2i-n\}=\frac{n_0(n^2-6n+8)}{2},
\text{   if $n$ is even},
$$
$$
2n_0\sum_{i=1}^{\frac{n-3}{2}}\sum_{j=\frac{n-1}{2}+i+1}^{n-1}\min\{2,2j-2i-n\}=\frac{n_0(n^2-6n+9)}{2},
\text{   if $n$ is odd}.
$$

Substituting expression (\ref{eq2}) and the obtained differences
into equation (\ref{eq1}), we get the required value
of $\Delta_v(H)$. 	
Since graphs $F$ also may contain vertices $w$ with  $\Delta_w(H)=m$, 	
$R_m(H) \geq k/(k + n_0)$.
\end{proof}

\section{Graphs with $\Delta_v(G)=m$}

Consider graph $G=H(n,k,l,l,l)$ of order $n(k + l)$ shown in Fig.~\ref{Fig4}.
All vertices of $C$ have degree $l+2$ while all the other
vertices of $G$ have degree $k$, i.e.,
graphs $F$ of order $l$ have no edges.
From Proposition~\ref{Prop1}, we immediately obtain the following result.

\begin{proposition} \label{Prop2}
Let $v$ be an arbitrary vertex of $C$ and $n=16m+95$, $k=m+6$, and $l=m+5$.
Then for any $m \ge 0$, $\Delta_v(G)=m$  and
$$
R_m(G) = \frac{1}{2 - \frac{1}{m+6}},
$$
that is $R_m(G) > 1/2$ for any $m\ge 0$ and
$R_m(G) \rightarrow 1/2$ when $m \rightarrow \infty$.
\end{proposition}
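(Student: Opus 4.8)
The plan is to specialize Proposition~\ref{Prop1} to $G=H(n,k,l,l,l)$ with the stated parameters and then sharpen the inequality it gives into an exact equality. Since the graphs $F$ here are edgeless, the subgraph induced by $V(F_v)\cup\{v\}$ is a star $K_{1,l}$, so the transmission of $v$ in it is $t_0=l$; together with $n_0=l$ this is exactly the instance $H(n,k,l,l,l)$, and Proposition~\ref{Prop1} applies with $n_0=t_0=l$.

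First I would substitute $n=16m+95$, $k=m+6$, $l=m+5$ into the formula for $\Delta_v(H)$. Note that $n$ is odd, so the odd-$n$ branch is the relevant one, and that $n\ge 3$, $k\ge 2$, $l\ge 1$ all hold. The parameters are engineered so that $n_0-k+2=l-k+2=1$, which simplifies the $\tfrac{n^2}{4}(n_0-k+2)$ term to $\tfrac{n^2}{4}$ and leaves $\Delta_v(G)=n(4m+24)-\tfrac{n^2}{4}-\tfrac{12m+95}{4}$. Expanding $(16m+95)(4m+24)$ and $(16m+95)^2$ and combining the two fractions over the common denominator $4$ (their non-integral parts add up to an integer), everything cancels down to $\Delta_v(G)=m$. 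This is a routine computation; the only thing to watch is integrality, which is precisely why the residues of $n$, $k$, $l$ are chosen as they are.

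Because $V(C)$ is a single orbit of the automorphism group of $H$ and $|V(C)|=kn$, all $kn$ of these vertices satisfy $\Delta_v(G)=m$, so $R_m(G)\ge kn/\bigl(n(k+l)\bigr)=k/(k+l)=(m+6)/(2m+11)=1/\bigl(2-\tfrac{1}{m+6}\bigr)$, which is the claimed expression; it is $>1/2$ for every $m\ge 0$ and tends to $1/2$ as $m\to\infty$. It remains to show the bound is tight, i.e. that no vertex $w\notin V(C)$ has $\Delta_w(G)=m$. Such a $w$ lies in one of the $n$ copies of $F$ and is adjacent exactly to the $k$ vertices of $C$ forming the corresponding part of a $K_{k,l}$; these $k$ vertices are pairwise at distance $2$. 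Since $l=m+5\ge 2$, that copy of $F$ still contains another vertex adjacent to all of them, so $G-w$ is connected and deleting $w$ changes no distance between the remaining vertices (every shortest path through $w$ can be rerouted with the same length). Hence $\Delta_w(G)=W(G)-W(G-w)=tr_G(w)\ge n(k+l)-1>m$, so exactly the $kn$ vertices of $C$ contribute and $R_m(G)=k/(k+l)$.

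The substitution and cancellation in the second step are mechanical, and the lower bound is immediate from Proposition~\ref{Prop1}; the only genuinely new ingredient is the tightness check in the last step, and even that reduces quickly to the single observation that an edgeless $F$ with $l\ge 2$ makes the vertices of the $F$-copies distance-irrelevant once they are deleted. So I expect no serious obstacle here — the main risk is an arithmetic slip in verifying $\Delta_v(G)=m$ (and its integrality).
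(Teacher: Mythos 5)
Your proof is correct and follows the same route as the paper: the paper simply states that Proposition~\ref{Prop1} ``immediately'' yields the result, i.e.\ the substitution $n_0=t_0=l$ with the given parameters and the simplification $n_0-k+2=1$, which is exactly your computation, and your arithmetic (including the cancellation to $\Delta_v(G)=m$) checks out. Your tightness argument --- that a vertex $w$ of an edgeless copy of $F$ with $l\ge 2$ can be deleted without changing any remaining distance, so $\Delta_w(G)=tr_G(w)\ge n(k+l)-1>m$ --- is a detail the paper leaves implicit but is genuinely needed to upgrade the inequality of Proposition~\ref{Prop1} to the stated equality.
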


\begin{figure}[t]
\center{\includegraphics[width=0.6\linewidth]{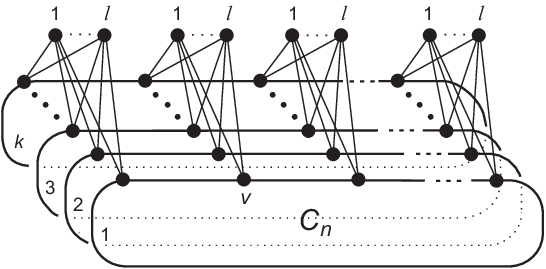}}
\caption{Graph $G=H(n,k,l,l,l)$ with  $\Delta_v(G)=m$.}
\label{Fig4}
\end{figure}

Table~\ref{Tab1} contains parameters of the initial graphs of the constructed family.
The first graph has maximal $R_0(G)=6/11 \approx 0.545$
and the other graphs satisfy condition $0.5 < R_m(G) < 0.545$.
Note that graphs $G$ described in
\cite{Akhm2023}
have $\Delta_v(G)=m$ and  $R_m(G) < 0.5$ for all $m \ge 7$.

\begin{table}[h]
\centering
\caption{Parameters of graphs $G$ having $R_m > 1/2$.} \label{Tab1}
\begin{tabular}{rrrrrrr} \hline
$m$ & $n$  & $k$ & $l$ & $R_m$  & $|V(G)|$ & $W$ \ \ \ \   \\ \hline
0   &  95  &  6  &  5  &  0.545 & 1045 \ & 13733010  \\
1   &  111 &  7  &  6  &  0.538 & 1443 \ & 30366714  \\
2   &  127 &  8  &  7  &  0.533 & 1905 \ & 60203080  \\
3   &  143 &  9  &  8  &  0.529 & 2431 \ & 109884060 \\
4   &  159 &  10 &  9  &  0.526 & 3021 \ & 187977750 \\
5   &  175 &  11 &  10 &  0.523 & 3675 \ & 305224150 \\
\hline
\end{tabular}
\end{table}

Let graph $G'$ be obtained from graph $G$ of Proposition~\ref{Prop2} by
inserting $s$ edges between vertices of every graph $F$
in the same way.
It is clear that every inserted edge decreases distances between two vertices of $F$ from 2 to 1.
Since the shortest paths between other vertices of $G'$ do not contain inserted edges,
$W(G') = W(G) - ns$ and $W(G'-v) = W(G-v) - ns$.
This implies $W(G')=W(G'-v)$.

\begin{corollary}
For every $m \ge 0$, $R_m(G')=  R_m(G)$.
\end{corollary}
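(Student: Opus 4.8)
The plan is to show that the map $v \mapsto \Delta_v(G')$ restricted to the orbit $C$ is unchanged by the edge insertions, and that the count of non-orbit vertices $w$ with $\Delta_w(G') = m$ is likewise preserved, so that the numerator and denominator defining $R_m(G')$ both coincide with those for $R_m(G)$. The denominator is immediate: inserting edges inside the graphs $F$ does not change the vertex set, so $|V(G')| = |V(G)| = n(k+l)$.

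First I would establish the two distance identities quoted in the paragraph preceding the corollary, namely $W(G') = W(G) - ns$ and $W(G'-v) = W(G-v) - ns$ for any $v \in C$. The key observation is that every shortest path in $G$ (or in $G-v$) between two vertices lying in the same copy $F_j$ either already has length $1$ or passes through a vertex of $C$; inserting an edge between the two part-vertices of some $K_{k,l}$ inside $F_j$ shortens exactly the distance between that specific pair from $2$ to $1$, and affects no other pair of vertices, because any path joining vertices in distinct blocks must traverse the cycle structure and an internal $F$-edge offers no shortcut there. Summing over the $s$ inserted edges in each of the $n$ copies of $F$ gives a total decrease of $ns$ in the Wiener index; the same argument applies verbatim in $G - v$, since removing a vertex of $C$ does not touch any copy of $F$ or create new shortest paths routed through the interior of an $F$. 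I would state this carefully, perhaps isolating it as a short lemma about how an added edge inside a block changes $W$.

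From these two identities the conclusion follows at once: for every $v \in C$,
$$
\Delta_v(G') = W(G') - W(G'-v) = \bigl(W(G) - ns\bigr) - \bigl(W(G-v) - ns\bigr) = \Delta_v(G) = m,
$$
using Proposition~\ref{Prop2} for the last equality. Hence all $k n$ vertices of $C$ contribute to the numerator of $R_m(G')$, exactly as in $G$. It remains to check that the non-$C$ vertices contribute identically in $G'$ and $G$. Here I would argue that the insertion is performed "in the same way" in every copy of $F$, so $G'$ retains the symmetry making $V(C)$ an orbit and making the $n$ copies of $F$ mutually equivalent; thus a vertex $w$ in some copy of $F$ satisfies $\Delta_w(G') = m$ if and only if the corresponding vertex in every other copy does, and whether this happens is the same question as in the lower bound $R_m(G) \ge k/(k+l)$ of Proposition~\ref{Prop1}. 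Since in Proposition~\ref{Prop2} the value $R_m(G) = \frac{1}{2-1/(m+6)} = \frac{k}{k+l}$ is attained with equality, no vertex of $F$ is a "$\Delta = m$" vertex in $G$; by the symmetry and the distance analysis this persists in $G'$, so the numerator is exactly $kn$ in both cases and $R_m(G') = \frac{kn}{n(k+l)} = R_m(G)$.

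The main obstacle is the bookkeeping in the distance lemma: one must be sure that an edge added inside a copy of $F$ never creates a shortcut for any pair of vertices other than its two endpoints — in particular not for pairs lying in two different copies of $F$, nor for a pair with one vertex on $C$. This requires checking that any shortest path already routes through the cycle vertices rather than "detouring" through the interior of an $F$, which is true because the bipartite gadgets $K_{k,l}$ place every $F$-vertex at distance $1$ from its cycle neighbours and a detour through a second $F$-vertex would add length; I would make this precise by noting that the distance between a cycle vertex $u$ and an $F$-vertex $x$ equals $d_C(u, \text{attachment of } x) + 1$ regardless of the internal edges of $F$, so internal $F$-edges are never used on shortest paths leaving a block.
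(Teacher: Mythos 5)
Your argument is the same as the paper's: the identities $W(G')=W(G)-ns$ and $W(G'-v)=W(G-v)-ns$, justified by checking that an edge inserted inside a copy of $F$ shortens only the distance between its own two endpoints (since every $F$-vertex already reaches all its cycle neighbours in one step, internal $F$-edges never lie on shortest paths leaving a block), immediately give $\Delta_v(G')=\Delta_v(G)=m$ for every $v\in C$, which is all the paper itself records. You go one step further by discussing whether vertices inside the copies of $F$ could attain $\Delta_w=m$ in $G'$ --- a point the paper silently omits --- but note that your appeal to ``symmetry and the distance analysis'' there is informal, because for $w\in F$ the relation $W(G'-w)=W(G-w)-ns$ fails in general (deleting $w$ also deletes the inserted edges at $w$), so that part would need its own computation if one insists on the exact equality rather than the bound $R_m(G')\ge k/(k+l)$.
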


It is also possible to construct regular graphs with this property for even $l$.
Let graph $G'$ be obtained from graph $G$ by inserting a perfect matching into $F$.
Then $m$ is odd and $G'$ is a regular graph.

\begin{corollary} \label{m_reg}
Let $v$ be an arbitrary vertex of $C$. Then for any odd
$m \ge 1$, there exists a $(m+7)$-regular graph $G$ having
$$
R_m(G) = \frac{1}{2- \frac{1}{m+6}},
$$
that is $R_m(G) > 1/2$ for any odd $m \ge 1$ and
$R_m(G) \rightarrow 1/2$ when $m \rightarrow \infty$.
\end{corollary}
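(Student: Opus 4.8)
The plan is to derive Corollary~\ref{m_reg} directly from Proposition~\ref{Prop2} together with the matching-insertion argument already established for the preceding corollary. First I would restrict to odd values of $m \ge 1$; then $l = m+5$ is even, so each copy of $F$ (which, in $G = H(n,k,l,l,l)$, is an edgeless graph on $l$ vertices) admits a perfect matching on its $l$ vertices, consisting of $s = l/2$ edges. Let $G'$ be obtained from $G$ by inserting such a perfect matching into every copy of $F$, in the symmetric way described before the previous corollary. By that corollary's argument, $W(G') = W(G) - ns$ and $W(G' - v) = W(G-v) - ns$ for every $v \in C$, hence $\Delta_v(G') = \Delta_v(G) = m$ and $R_m(G') = R_m(G)$, which Proposition~\ref{Prop2} evaluates as $1/(2 - \tfrac{1}{m+6})$.

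The second step is the regularity bookkeeping. In $G$, the vertices of $C$ have degree $l+2$ and the remaining vertices have degree $k$. With the parameter choice $k = m+6$ and $l = m+5$ we get $l + 2 = m+7$ and $k = m+6$, so $G$ itself is not regular. Inserting a perfect matching into each $F$ raises the degree of every $F$-vertex by exactly $1$, from $k = m+6$ to $m+7$, while leaving the degrees of $C$-vertices untouched at $l+2 = m+7$. Therefore $G'$ is $(m+7)$-regular. Renaming $G'$ as $G$ in the statement, we obtain for every odd $m \ge 1$ an $(m+7)$-regular graph with $R_m(G) = 1/(2 - \tfrac{1}{m+6}) > 1/2$, and letting $m \to \infty$ through odd values gives $R_m(G) \to 1/2$.

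I do not expect a genuine obstacle here: the corollary is essentially a specialization of the matching construction with the observation that the chosen parameters make the two degree classes differ by exactly one. The only point requiring a word of care is confirming that inserting the matching does not change any distances outside the copies of $F$ — but this was already argued for the previous corollary (shortest paths between vertices not both in the same $F$ never use an inserted edge, since any $F$ is pendant-like, attached only through the $K_{k,l}$-neighbours to $C$), so it applies verbatim. A brief sentence reminding the reader that $l = m+5$ being even is exactly the parity condition that produces odd $m$ (equivalently, that a $(m+7)$-regular graph forces $m+7$ and hence $m$ to have a fixed parity only in combination with $|V(G)| = n(k+l)$, which one can check is even) rounds out the argument, after which the displayed formula for $R_m(G)$ is quoted unchanged from Proposition~\ref{Prop2}.
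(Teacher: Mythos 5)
Your proposal is correct and follows essentially the same route as the paper: specialize Proposition~\ref{Prop2}, insert a perfect matching into each (edgeless, even-order) copy of $F$ so that the $F$-vertex degrees rise from $k=m+6$ to $m+7$ matching the degree $l+2=m+7$ of the $C$-vertices, and observe that the matching edges shift $W(G)$ and $W(G-v)$ by the same amount $ns$ so that $\Delta_v$ and $R_m$ are unchanged. The only cosmetic remark is that your closing parity aside is more convoluted than needed --- the relevant condition is simply that a perfect matching on $l=m+5$ vertices requires $l$ even, i.e.\ $m$ odd, which is exactly what the paper states.
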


It will be interesting to find such regular graphs
for small vertex degrees, especially, for cubic and quartic graphs
that represent chemical graphs.

\section{Graphs with $\Delta_v(G)=0$}

The proposed construction of graph $H$ makes it possible to build
new families of graphs $G$  with $R_0(G) < 1/2$ and $R_0(G) \rightarrow 1/2$
when the order of $G$ increases. Similar results were obtained in
\cite{Akhm2023}
for graphs of a different structure.

Let $G = H(2k+24,k,1,k+8,2k+51)$ be an infinite family of graphs
of order $n(2k + 8)$ shown in
Fig.~\ref{Fig5}.
Here graph $F$ consists of the star with $k-1$ branches and
the attached simple path of order 9.
Using Proposition~\ref{Prop1}, we have the following result.

\begin{figure}[h]
\center{\includegraphics[width=0.6\linewidth]{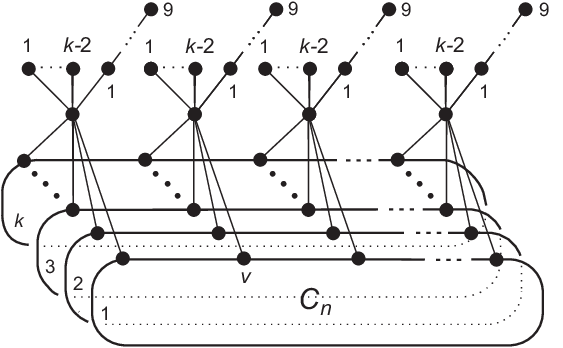}}
\caption{The family of graphs $G$ with $\Delta_v(G)=0$.}
\label{Fig5}
\end{figure}

\begin{proposition} \label{Prop3}
Let $v$ be an arbitrary vertex of $C$.
Then for any $k \ge 2$,
$$
R_0(G) = \frac{1}{2 + \frac{8}{k}},
$$
that is  $R_0(G) < 1/2$ for any $k\ge 2$ and
$R_0(G) \rightarrow 1/2$
when $k \rightarrow \infty$.
\end{proposition}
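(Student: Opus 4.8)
The plan is to specialize Proposition~\ref{Prop1} to the parameter choice $n=2k+24$, $l=1$, $n_0=k+8$, and $t_0=2k+51$, and to verify that the resulting graph genuinely has the structure claimed (so that the formula for $\Delta_v(H)$ applies) and that $\Delta_v(G)=0$. First I would check that $n=2k+24$ is even, so only the even branch of the $\Delta_v$ formula is needed; this avoids the odd-$n$ case entirely. Next I would confirm that $t_0=2k+51$ is indeed the transmission of a vertex $v$ in the subgraph induced by $V(F_v)\cup\{v\}$ for the described $F$ (a star with $k-1$ branches together with an attached path of order $9$): the vertex $v$ is joined to all $k+8$ vertices of $F_v$ via the $K_{k,1}$-gadgets, wait --- more precisely, with $l=1$ each $K_{k,1}$ contributes one vertex of the "other part", and $F$ has order $n_0=k+8$, so I would carefully identify how $v$ sees $F_v$ and tally $tr$ over the $k+8$ vertices of $F_v$ to get $2k+51$. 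This bookkeeping --- getting the adjacency pattern between $C$ and the $F$-copies right for $l=1$, and confirming the path-of-order-$9$ plus star structure yields exactly that transmission --- is where I expect the only real friction; everything downstream is substitution.

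Once the parameters are validated, I would substitute $n=2k+24$, $n_0=k+8$, $t_0=2k+51$ into
$$
\Delta_v(H)= n(2k+t_0+n_0+2) - \frac{n^2}{4}(n_0-k+2) - (2n_0+8),
$$
and verify that the polynomial in $k$ collapses to $0$ identically. Concretely: $2k+t_0+n_0+2 = 2k+(2k+51)+(k+8)+2 = 5k+61$, so the first term is $(2k+24)(5k+61)$; the coefficient $n_0-k+2 = (k+8)-k+2 = 10$, so the second term is $10(2k+24)^2/4 = \tfrac{5}{2}(2k+24)^2$; and $2n_0+8 = 2(k+8)+8 = 2k+24$. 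I would then expand and check $(2k+24)(5k+61) - \tfrac{5}{2}(2k+24)^2 - (2k+24) = (2k+24)\bigl[(5k+61) - \tfrac{5}{2}(2k+24) - 1\bigr] = (2k+24)\bigl[5k+61 - 5k - 60 - 1\bigr] = (2k+24)\cdot 0 = 0$. Hence $\Delta_v(G)=0$, i.e.\ $v$ is a \v{S}olt\'{e}s vertex, and since every vertex of $C$ lies in the orbit described in Section~2, all $kn=|V(C)|$ vertices of $C$ are \v{S}olt\'{e}s vertices.

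Finally, for the ratio: by Proposition~\ref{Prop1} with $m=0$ we have $R_0(G)\ge k/(k+n_0)$, but here I would argue equality. The graph $G$ has order $n(k+n_0) = n(k + k+8) = n(2k+8)$, of which $kn$ vertices lie in $C$; I would show the remaining $n(k+8)$ vertices --- those in the $F$-copies --- are \emph{not} \v{S}olt\'{e}s vertices (or at least that the count of \v{S}olt\'{e}s vertices is exactly $kn$), so that
$$
R_0(G) = \frac{kn}{n(2k+8)} = \frac{k}{2k+8} = \frac{1}{2+\frac{8}{k}}.
$$
From this closed form, $R_0(G)<1/2$ for every $k\ge 2$ since $8/k>0$, and $R_0(G)\to 1/2$ as $k\to\infty$. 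The one genuine obstacle, as noted, is the transmission computation $t_0=2k+51$ and confirming $G-v$ stays connected so that $\Delta_v$ is well defined; the algebraic vanishing is a short direct check, and the ratio is then immediate.
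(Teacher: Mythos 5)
Your proposal matches the paper's approach exactly: the paper gives no separate proof of Proposition~\ref{Prop3} beyond the remark ``Using Proposition~\ref{Prop1}, we have the following result,'' and your substitution $n=2k+24$, $n_0=k+8$, $t_0=2k+51$ into the even-$n$ branch, with the factorization $(2k+24)\bigl[5k+61-\tfrac{5}{2}(2k+24)-1\bigr]=0$, is precisely the intended computation, and the algebra checks out. The only caveat --- that Proposition~\ref{Prop1} yields $R_0(G)\ge k/(2k+8)$ while the stated equality requires verifying no vertex of $F$ is a \v{S}olt\'{e}s vertex --- is one you correctly flag, and the paper itself leaves it equally unaddressed.
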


Table~\ref{Tab2} contains parameters of the initial members of the constructed family.

\begin{table}[h]
\centering
\caption{Parameters of graphs $G$ with $R_0(G) \rightarrow 1/2$.} \label{Tab2}
\begin{tabular}{rrrrrr} \hline
$k$ & $n$ & $n_0$ & $R_0$  & $|V(G)|$ & $W$ \ \ \   \\ \hline
2   &  28 &  10  &  0.167 &  336  \ \ \  &  903364 \\
3   &  30 &  11  &  0.214 &  420   \ \ \ &  1372890 \\
4   &  32 &  12  &  0.250 &  512  \ \ \ &  2013728 \\
5   &  34 &  13  &  0.278 &  612  \ \ \ &  2868682 \\
6   &  36 &  14  &  0.300 &  720  \ \ \ &  3987180 \\
7   &  38 &  15  &  0.318 &  836  \ \ \ &  5425754 \\
\hline
\end{tabular}
\end{table}

Let graph $G'$ be obtained from $G$ by inserting $s$ edges between
$k-2$ pendent vertices of the star of $F$ in the same way.
It is not hard to see that the inserted edges equally decrease
Wiener indices of graphs $G$ and $G-v$. Namely,
$W(G') = W(G) - ns$ and $W(G'-v) = W(G-v) - ns$.
Then $W(G')=W(G'-v)$.

\begin{corollary}
Proposition~\ref{Prop3} is valid for graphs $G'$.
\end{corollary}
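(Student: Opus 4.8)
The final statement to prove is the Corollary asserting that Proposition~\ref{Prop3} remains valid for the modified graphs $G'$, obtained from $G$ by inserting $s$ edges among $k-2$ pendent vertices of the star inside each copy of $F$ (in the same symmetric way across all $n$ copies).

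The plan is to show that passing from $G$ to $G'$ shifts both $W(G)$ and $W(G-v)$ by the same additive constant, so that the difference $\Delta_v(G') = W(G') - W(G'-v)$ equals $\Delta_v(G) = W(G) - W(G-v) = 0$, and then to observe that the ratio $R_0$ is unchanged because the vertex set and the orbit structure are unchanged. First I would fix one of the $n$ copies of $F$ and analyze the $s$ inserted edges there. Each such edge joins two pendent vertices of the star portion of $F$; before insertion these two vertices are at distance $2$ (through the center of the star), and after insertion they are at distance $1$. The key geometric claim is that no shortest path between any other pair of vertices of $G$ is shortened by these new edges: a pendent vertex of the star has the star-center as its unique neighbor inside $F$ and is joined (via the $K_{k,l}$ gadget, here $l=1$) to the cycle vertex $v$ carrying that copy of $F$; so any path leaving a star pendent vertex toward the rest of the graph already passes through either the star center or the cycle, and routing through a sibling pendent vertex via a new edge does not help. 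I would state this as the routing lemma and verify it by the short case check just sketched.

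Granting that, the only distances that change when going from $G$ to $G'$ are the $s$ pairs of star-pendent vertices inside each of the $n$ copies of $F$, each dropping by exactly $1$; hence $W(G') = W(G) - ns$. The same argument applies verbatim in $G-v$: deleting a single cycle vertex $v$ neither removes any of the inserted edges nor changes the fact that the only shortened pairs are those $ns$ sibling pairs (the copy $F_v$ attached to $v$ still has all its internal structure, since $v$ is a cycle vertex, not a vertex of any $F$). Therefore $W(G'-v) = W(G-v) - ns$, and subtracting gives $\Delta_v(G') = \Delta_v(G) = 0$ for every $v \in C$, for all $k \ge 2$. This reproduces the equality part of Proposition~\ref{Prop3}; the paper already records $W(G') = W(G) - ns$ and $W(G'-v) = W(G-v) - ns$ in the text preceding the corollary, so this step can simply cite that.

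For the ratio, I would note that $V(G') = V(G)$ as sets, $|V(G')| = n(2k+8)$, and the cycle union $C$ is still an orbit of $\mathrm{Aut}(G')$ since the edge insertions are performed identically in all $n$ copies of $F$, so the automorphisms permuting the copies survive. Thus $\{v : \Delta_v(G') = 0\} \supseteq V(C)$, giving $R_0(G') \ge |V(C)|/|V(G')| = kn / (n(2k+8)) = 1/(2 + 8/k)$; and since the bound in Proposition~\ref{Prop3} is stated as an equality, I would remark that the same count of non-$C$ vertices with $\Delta_w = 0$ is unaffected (the vertices of $F$ have their own unchanged transmission-balance analysis, as the insertions shift their contributions symmetrically), so $R_0(G') = 1/(2 + 8/k)$, which is $< 1/2$ for all $k \ge 2$ and tends to $1/2$ as $k \to \infty$. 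The main obstacle is the routing lemma: one must be careful that inserting $s$ edges among $k-2$ pendent vertices, for $s$ up to $\binom{k-2}{2}$, genuinely creates no shortcut — in particular that a path between two vertices in different copies of $F$, or between a cycle vertex and a distant $F$-vertex, never benefits — but because every star pendent vertex has eccentric access to the rest of $G$ only through its star center or through its host cycle vertex, the verification is a finite and short case distinction rather than a computation.
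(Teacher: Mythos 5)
Your proposal is correct and follows essentially the same route as the paper: the text immediately preceding the corollary already records that each inserted edge shortens exactly one sibling pair of star pendants from distance $2$ to $1$ and affects no other shortest path, so $W(G')=W(G)-ns$ and $W(G'-v)=W(G-v)-ns$, whence $\Delta_v(G')=\Delta_v(G)=0$ and the ratio is unchanged. Your added detail (the explicit routing lemma and the observation that the orbit structure of $C$ survives the symmetric edge insertions) merely makes the paper's terse argument explicit.
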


Consider the family of graphs $G= H(n,k,1,k+13,2k+61)$
of order $n(2k+13)$ where $n=(4k+59)/3$ for $k=3i+1$, $i \ge 1$
(see Fig.~\ref{Fig6}).
Graph $F$ consists of the star with $k-1$ branches and
the attached simple cycle $C_{14}$.

\begin{figure}[h!]
\center{\includegraphics[width=0.6\linewidth]{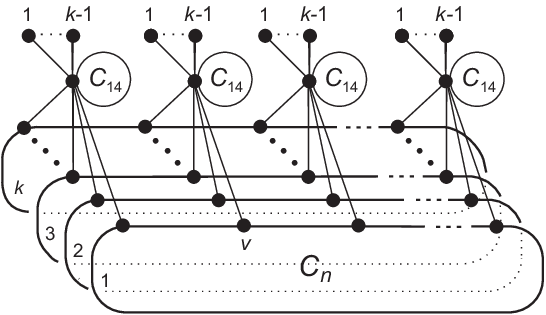}}
\caption{The family of graphs $G$ with $\Delta_v(G)=0$.}
\label{Fig6}
\end{figure}

\begin{proposition} \label{Prop4}
Let $v$ be an arbitrary vertex of $C$. Then for any $k \ge 4$
$$
R_0(G) = \frac{1}{2 + \frac{13}{k}},
$$
that is $R_0(G) < 1/2$ for any $k\ge 4$ and
$R_0(G) \rightarrow 1/2$ when $k \rightarrow \infty$.
\end{proposition}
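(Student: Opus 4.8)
The plan is to realise $G$ as an instance $H(n,k,1,n_0,t_0)$ of the construction behind Proposition~\ref{Prop1}, evaluate the formula there, and then upgrade the bound $R_0(G)\ge k/(k+n_0)$ it gives to an equality. First I would pin down the parameters. Here $l=1$, so each copy of $K_{k,1}$ contributes a single vertex $a$ outside the cycles; $a$ is the centre of the star $K_{1,k-1}$ and the cycle $C_{14}$ passes through $a$, so $F$ has $n_0=1+(k-1)+13=k+13$ vertices and $|V(G)|=n(k+n_0)=n(2k+13)$. Since $v$ meets $F_v$ only at $a$, the transmission of $v$ in the subgraph induced by $V(F_v)\cup\{v\}$ equals $n_0+tr_F(a)$, and $tr_F(a)=(k-1)\cdot 1+\bigl(2(1+2+3+4+5+6)+7\bigr)=(k-1)+49=k+48$, whence $t_0=(k+13)+(k+48)=2k+61$, matching $H(n,k,1,k+13,2k+61)$. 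Because $k=3i+1$ gives $4k+59=3(4i+21)$, we have $n=4i+21$, which is odd and at least $25\ge 3$; also $k\ge 4\ge 2$ and $l=1\ge 1$, so Proposition~\ref{Prop1} applies in its ``$n$ odd'' form.

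Next I would substitute $n_0=k+13$, $t_0=2k+61$ into the formula of Proposition~\ref{Prop1}. With $n_0-k+2=15$, $2k+t_0+n_0+2=5k+76$ and $\tfrac14(k+11n_0+34)=\tfrac14(12k+177)$, it becomes
\[
\Delta_v(G)=n(5k+76)-\frac{15n^2+12k+177}{4}.
\]
Plugging in $n=4i+21$ and $k=3i+1$, both $n(5k+76)$ and $\tfrac14(15n^2+12k+177)$ simplify to $60i^2+639i+1701$, so $\Delta_v(G)=0$. Thus every vertex of $C$ is a \v{S}olt\'{e}s vertex and Proposition~\ref{Prop1} yields $R_0(G)\ge \dfrac{k}{k+n_0}=\dfrac{k}{2k+13}=\dfrac{1}{2+13/k}$, a quantity below $\tfrac12$ for every $k\ge 4$ and tending to $\tfrac12$ as $k\to\infty$. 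This computation is routine once the parameters are identified, but it is precisely what forces the choice $n=(4k+59)/3$.

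It remains to prove that no vertex outside $C$ is a \v{S}olt\'{e}s vertex, so that $R_0(G)=\dfrac{kn}{n(2k+13)}=\dfrac{1}{2+13/k}$ exactly; by symmetry it suffices to look at one copy $F$. Deleting the centre $a$ isolates its $k-1\ge 3$ pendant leaves, so $G-a$ is disconnected and $a$ does not count. Each leaf $w$ of the star is a pendant vertex of $G$, hence $\Delta_w(G)=tr_G(w)>0$. Finally let $w$ be one of the $13$ vertices of $C_{14}$ other than $a$; then $G-w$ is connected and $\Delta_w(G)=tr_G(w)-I_w$, where $I_w$ is the total growth of pairwise distances under deletion of $w$. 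Since this $C_{14}$ meets the rest of $G$ only at $a$, the only affected pairs are those inside this $C_{14}$ and those consisting of a $C_{14}$-vertex whose distance to $a$ increased together with a vertex of $G$ outside this $C_{14}$; counting these gives $I_w\le c\,|V(G)|+c'$ for absolute constants $c,c'$, while on the other hand $tr_G(w)\ge \tfrac14(k+n_0)n^2=\tfrac14\,n\,|V(G)|$. Hence $\Delta_w(G)>0$ for all sufficiently large $k$, and for the finitely many smaller $k$ one checks $\Delta_w(G)\ne 0$ directly.

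I expect the last step to be the main obstacle. In the gadgets of Propositions~\ref{Prop2} and \ref{Prop3} the extra vertices are pendant or separating, so they are trivially excluded; here the vertices of the pendant $C_{14}$ are non-separating and their removal genuinely changes distances, with $\Delta_w(G)$ positive for some $w$ (those ``in the middle'' of the cycle) and negative for others (those adjacent to $a$). Consequently one cannot argue by a uniform sign, and the quantitative comparison of $tr_G(w)$ with $I_w$ above, together with a finite small-$k$ verification, is what makes the equality $R_0(G)=\frac{1}{2+13/k}$ (rather than merely ``$\ge$'') go through.
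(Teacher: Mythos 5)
Your proposal is correct and, for the part the paper actually argues, follows the same route: the paper offers no written proof of Proposition~\ref{Prop4} beyond the implicit ``substitute $n_0=k+13$, $t_0=2k+61$, $n=(4k+59)/3$ into Proposition~\ref{Prop1} and check $\Delta_v=0$,'' and your parameter identification ($t_0=n_0+tr_F(a)$ with $tr_F(a)=k+48$) and the cancellation $n(5k+76)=\tfrac14(15n^2+12k+177)=60i^2+639i+1701$ are exactly that computation, carried out correctly. Where you genuinely go beyond the paper is in noticing that Proposition~\ref{Prop1} only yields $R_0(G)\ge k/(k+n_0)$, while the statement asserts equality, and that here---unlike in Propositions~\ref{Prop2} and~\ref{Prop3}, where the non-cycle vertices are pendant or cut vertices---the $13$ non-cut vertices of the attached $C_{14}$ require a real argument. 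Your comparison $tr_G(w)=\Theta(k^3)$ versus $I_w=O(k^2)$ is sound (the $C_{14}$ meets the rest of $G$ only at the cut vertex $a$, so the affected pairs are exactly the ones you list), but note that the implied threshold is not small: a crude estimate gives $I_w$ up to roughly $30\,|V(G)|$ for the neighbours of $a$, so ``sufficiently large $k$'' only kicks in around $k$ of order $10^2$, leaving a few dozen residue-class values $k=3i+1$ to be checked by explicit computation, which you defer rather than perform. That deferred finite verification is the only incomplete step; the paper itself simply does not address the equality at all, so your write-up is, if anything, more complete than the source.
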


Table~\ref{Tab3} contains parameters of the initial graphs of the considered family.

\begin{table}[h!]
\centering
\caption{Parameters of graphs $G$ with $R_0(G) \rightarrow 1/2$.} \label{Tab3}
\begin{tabular}{rrrrrr} \hline
$k$ & $n$ & $n_0$ & $R_0$  & $|V(G)|$ & $W$ \ \ \   \\ \hline
4   &  25 &  3  &  0.190 &  525 \ \  &  1757200 \\
7   &  29 &  6  &  0.259 &  783 \ \   &  3936431 \\
10  &  33 &  9  &  0.303 &  1089\ \   &  7867266 \\
13  &  37 &  12 &  0.333 &  1443\ \   & 14443801 \\
16  &  41 &  15 &  0.356 &  1845\ \   & 24824516 \\
19  &  45 &  18 &  0.373 &  2295\ \   & 40466835 \\
\hline
\end{tabular}
\end{table}

In conclusion, we present two graphs having maximum $R_0(G)$
among all graphs constructed here.
Consider graph $G=H(71,4,1,3,5)$ of order 497 shown in Fig.~\ref{Fig7}.
Then $R_0(G)=4/7 \approx 0.571$ and $W(G)=W(G-v)=2427916$.
If every two pendent vertices at distance 2 are joint by an edge,
then the resulting graph $G'$  has the same $R_0(G')=4/7$
and $W(G')=W(G'-v)=W(G)-71=2427845$.

\begin{figure}[h]
\center{\includegraphics[width=0.6\linewidth]{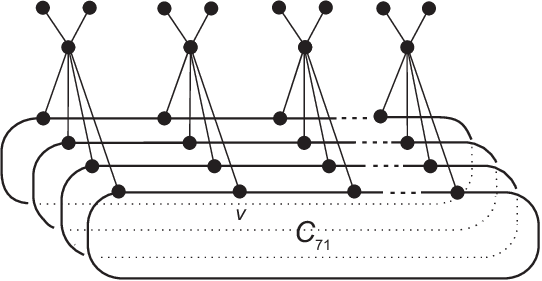}}
\caption{Graph $G$ with $R_0(G) = 4/7$.}
\label{Fig7}
\end{figure}

Our search shows that it is quite difficult to construct graphs
$G$ even with $R_0(G) > 1/2$. It would be interesting to
find graphs with a large number of \v{S}olt\'{e}s vertices.

\begin{problem}
Find graphs $G$ for which $2/3 \le R_0(G) < 1$.
\end{problem}

Graph of Fig.~\ref{Fig1} is the unique known example of such graphs.

\section*{Acknowledgements}

This work was supported by the Russian Science Foundation under grant 23-21-00459.

%\newpage

\end{document}